\documentclass[11pt]{amsart}

\usepackage{amsmath,amssymb,amsthm,color,enumerate,comment,bm}
\usepackage{tikz}
\usepackage{todonotes}
\usepackage{url} 
\usepackage{hyperref} 
\usepackage{subcaption}

\newtheorem{thm}{Theorem}[section]
\newtheorem{prop}[thm]{Proposition}

\newtheorem{cor}[thm]{Corollary}

\newtheorem{theorem}[thm]{Theorem}

\theoremstyle{definition}

\newtheorem{prob}[thm]{Problem}

\theoremstyle{remark}

\newcommand{\FF}{\mathbb{F}}

\title[Non-uniqueness of optimal $1$-embeddings]{Non-uniqueness of optimal $1$-embeddings on surfaces up to weak equivalence}

\author[K. Enami]{Kengo Enami}
\address{College of Liberal Arts and Sciences,  Kitasato University, 1-15-1 Kitasato, Minami-ku, Sagamihara, Kanagawa 252-0373, Japan}
\email{enamikengo@gmail.com}

\subjclass[2020]{Primary 05C10
}

\keywords{optimal $1$-embedding, $1$-planar graph, weak equivalence, re-embeddability, quadrangulation}

\begin{document}
\baselineskip.525cm

\begin{abstract}
We study the uniqueness of optimal $1$-embeddings on closed surfaces up to weak equivalence.
Answering a question of Suzuki in the negative, we show that for every closed surface other than the sphere, the projective plane and the Klein bottle, there exists an optimal $1$-embedded graph admitting two weakly inequivalent optimal $1$-embeddings.
Our construction is given explicitly on the torus and is then extended to other surfaces through a suitable connected-sum construction that preserves weak inequivalence.
As a consequence, we also obtain optimal $1$-embedded graphs with exponentially many pairwise weakly inequivalent optimal $1$-embeddings as the genus increases.
\end{abstract}

\maketitle

\section{Introduction}
In this paper, we only consider undirected, simple and connected graphs.
Ringel \cite{ringel} introduced the concept of ``$1$-plane graphs'' in the context of simultaneous vertex-face coloring of plane graphs.
In this paper, we focus on ``$1$-embeddings'' of graphs,
which generalize $1$-plane graphs to drawings on general surfaces.

A \emph{surface} means a compact connected
$2$-manifold without boundary.
Throughout this paper, all \emph{drawings} are assumed to be proper:
edges are simple arcs, adjacent edges do not cross, no edge passes through
a nonincident vertex, and no three edges meet at an interior point.
A drawing of a graph $G$ on a surface $\FF$ is called a \emph{$1$-embedding}
if each edge of $G$ is crossed at most once.
For a given graph $G$ and a given surface $\FF$,
if there exists a $1$-embedding of $G$ on $\FF$,
then we say that $G$ is \emph{$1$-embeddable} on $\FF$.
A graph $G$ is \emph{$1$-planar}
if $G$ is $1$-embeddable on the sphere (or equivalently, on the Euclidean plane).
An edge in a $1$-embedded graph on a surface is called a \emph{crossing edge} if it is involved in a crossing, and a \emph{non-crossing edge} otherwise.

It is well known that every $1$-planar graph of order $n$ has at most $4n-8$ edges and this bound is sharp (see, e.g., \cite{pt}).
It was also proved in \cite{nns,schumacher2} that on any surface $\FF$, the number of edges in a $1$-embeddable graph of order $n$  is at most $4n-4\chi(\FF)$, where $\chi(\FF)$ is the Euler characteristic of $\FF$.
A $1$-embedding on a surface is \emph{optimal} if it attains this bound.
It was shown in \cite{nns} that every simple optimal $1$-embedded graph can be obtained from a \emph{polyhedral} quadrangulation $H$,
that is, a $3$-connected and $3$-representative quadrangulation,
by adding a pair of crossing edges to each face of $H$.
We call $H$ the \emph{quadrangular subgraph}.

It is often convenient to regard a graph $G$ not only as a combinatorial object but also as a $1$-dimensional topological space, namely, a simplicial $1$-complex.
From this viewpoint, a $1$-embedding of $G$ on a surface $\FF$ can be regarded as a continuous map $f: G\to \FF$.
If $f$ is injective, then it is an embedding in the usual sense.
We call the image $f(G)\subset\FF$, together with its graph structure, a \emph{$1$-embedded graph}.
When no confusion can arise, we also use the term \emph{$1$-embedding}
for the image $f(G)$ itself, rather than for the map $f$.

When we deal with two or more $1$-embeddings of $G$ on a surface,
we denote them by $f_1(G), f_2(G), \dots$ to distinguish them.
Two $1$-embeddings $f_1,f_2:G\to\FF$ of a graph $G$ on a surface $\FF$
are said to be \emph{equivalent}
if there exists a homeomorphism $h:\FF\to\FF$ such that
$hf_1=f_2$, and they are \emph{inequivalent} otherwise.
We say that $G$ is \emph{uniquely $1$-embeddable} on $\FF$ (up to equivalence)
if any two $1$-embeddings of $G$ on $\FF$ are equivalent.

A weaker notion is \emph{weak equivalence}.
Two $1$-embeddings $f_1,f_2:G\to\FF$ are \emph{weakly equivalent}
if there exist an automorphism $\sigma:G\to G$ and a homeomorphism
$h:\FF\to\FF$ such that $hf_1=f_2\sigma$.
Otherwise, they are \emph{weakly inequivalent}.
Informally, weak equivalence allows us to ignore the labels of the vertices:
two $1$-embeddings with the same topological form are regarded as equivalent
even if their vertex labels differ by an automorphism of the graph.

In topological graph theory, much attention has been devoted to the uniqueness and re-embeddability of graph embeddings.
The \emph{re-embeddability} problem asks how many inequivalent embeddings a given graph admits on a surface and how these embeddings are related to one another.
This aspect of graph embeddings is also referred to as their \emph{flexibility}.

Classical results in this direction are Whitney's theorems
\cite{whitney1,whitney2}.
They state that every $3$-connected planar graph is uniquely embeddable on the sphere,
and that any two embeddings of a $2$-connected planar graph can be transformed into each other by a sequence of local operations known as \emph{Whitney flips}.
Related problems have also been studied extensively on other surfaces (see, e.g., \cite{enami,mrss,mohar,mr1,rzz,st,suzukiReem}).

Naturally, questions of uniqueness and re-embeddability also arise for $1$-embeddings.
The re-embeddability of $1$-embeddings is sometimes referred to as
\emph{re-$1$-embeddability} in the literature.
Schumacher~\cite{schumacher1} proved that every $5$-connected optimal
$1$-plane graph is uniquely $1$-embeddable on the sphere, except for a certain
infinite family.
Suzuki~\cite{suzukimax} later showed that the $5$-connectivity assumption is unnecessary.
Suzuki~\cite{suzukipro} subsequently studied the corresponding problem for
$1$-embeddings on the projective plane.

\begin{theorem}[Suzuki \cite{suzukipro}]\label{thm:1embeddigPro}
The number of inequivalent $1$-embeddings of an optimal $1$-embedded graph
on the projective plane whose quadrangular subgraph is bipartite is at most $24$.
\end{theorem}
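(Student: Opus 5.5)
The plan is to reduce the count of $1$-embeddings of $G$ to a count of ways of recovering a polyhedral quadrangulation inside $G$, and then bound the latter using the bipartite structure. Let $G$ be optimal $1$-embedded on the projective plane with quadrangular subgraph $H$ bipartite, and let $f$ be any $1$-embedding of $G$ on the projective plane. Since $G$ has $4n-4\chi$ edges, $f$ is itself optimal. By the structure theorem of \cite{nns} quoted above, $f(G)$ is obtained from a polyhedral quadrangulation $H_f$, whose edges are the non-crossing edges of $f$, by adding a crossing pair in each face. So $f$ is determined up to equivalence by two pieces of data: the spanning subgraph $H_f \subseteq G$, and the embedding of $H_f$. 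The latter is unique up to equivalence, because polyhedral ($3$-connected, $3$-representative) quadrangulations of the projective plane have face-width at least $3$ and are therefore uniquely embeddable. The crossing edges are then forced as the diagonal pairs of the faces. Hence the number of inequivalent $1$-embeddings is at most the number of spanning subgraphs $H'\subseteq G$ such that $H'$ is a polyhedral quadrangulation of the projective plane and $G$ is $H'$ with both diagonals added to every face.

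Next I would classify such $H'$ relative to $H$. Every edge of $G$ is either an edge of $H$ or a diagonal of a face of $H$. Because $H$ is bipartite, the diagonals join vertices of the same colour class, so $G-E(H)$ is a disjoint union of two graphs $D_1, D_2$ on the two colour classes $V_1, V_2$. Each $D_i$ is itself a quadrangulation-type graph: the ``diagonal graph'' of $H$ on $V_i$, which is cellularly embedded with faces corresponding to the vertices of $V_{3-i}$. Take another admissible $H'$. Each $4$-cycle face of $H'$ together with its two diagonals forms a $K_4$ in $G$. I would show the following local rigidity statement. Around each vertex $v$ of $G$, the rotation of $G$ in any optimal $1$-embedding alternates between $H'$-edges and crossing edges. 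Moreover, the $K_4$'s of $G$ through $v$ are only those coming from faces of $H$, unless $G$ has very small order or the local structure is special. This forces $H'$ either to equal $H$ or to arise by a global ``swap'' of roles: the non-crossing edges become some combination of $E(H)$, $D_1$ and $D_2$. The candidates are $H$, $D_1\cup D_2$ plus a matching structure, and so on. This leaves a bounded number of possibilities for $E(H')$. Each possibility is then combined with finitely many choices, coming from symmetries of the local configuration, in identifying $f$ up to homeomorphism. Multiplying the number of global types by the local ambiguity should give the constant $24$.

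The main obstacle is the local rigidity step: proving that the $K_4$-subgraphs of $G$ are essentially only the face-$K_4$'s of $H$, so that admissible $H'$ cannot be chosen ``piecewise''. I would first attack it by counting. A vertex of degree $d$ in $H$ has degree $2d$ in $G$. I would analyse the neighbourhood link, which is a cycle of length $2d$ alternating $H$-neighbours and diagonal-neighbours, and show that in any $1$-embedding the non-crossing edges at $v$ must be an alternate set in this link. That leaves only two local choices per vertex. Connectivity then propagates one choice globally, and bipartiteness of $H$ keeps the two colour classes consistent. Small exceptional graphs, such as those coming from $K_4$-heavy quadrangulations of low order, would have to be checked by hand. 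These small cases are where I expect the bound $24$ actually to be attained or needed.
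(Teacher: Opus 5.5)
The paper gives no proof of this statement; it is quoted from Suzuki. Your plan therefore has to stand on its own, and as written it does not. The reduction via the structure theorem to spanning quadrangular subgraphs $H'\subseteq G$ is fine. Both substantive steps after it, however, fail as stated, and the constant $24$ is never derived.

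First, the inference ``face-width at least $3$, hence uniquely embeddable'' is false on the projective plane for the notion of equivalence used here ($hf_1=f_2$, labels respected). $K_6$ is $5$-connected and triangulates the projective plane with face-width $3$. It nevertheless has $12$ pairwise inequivalent embeddings: the $720$ labellings of its map, modulo the $60$ automorphisms of that map. What your count needs is a genuine bound on the number of inequivalent embeddings of each admissible $H'$ (with its prescribed diagonals), not uniqueness. Your closing step also contradicts this claim. If $f$ were determined up to equivalence by $H_f$, there would be no ``local ambiguity in identifying $f$ up to homeomorphism'' left to multiply by.

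Second, the local rigidity step is false. Suppose $w$ has degree $3$ in $H$ with neighbours $a,b,c$. The three faces at $w$ supply the diagonals $ab,bc,ca$, so $\{w,a,b,c\}$ spans a $K_4$ of $G$ that is not a face-$K_4$. Such vertices occur in polyhedral quadrangulations of every size (for instance after cube additions), so they are not ``small exceptional cases''.

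Your dichotomy (``the non-crossing edges at $v$ are one of the two alternate halves of the link cycle'') also fails. The rotation at $v$ in another $1$-embedding can be a different Hamiltonian cycle of the link. This already happens on the sphere. Take $k\geq 3$, a rim cycle $v_1\cdots v_{2k}$, a hub $x$ joined to the odd rim vertices and a hub $y$ joined to the even ones. Adding diagonals gives the square of the rim cycle with $x$ and $y$ joined to every rim vertex. Exchanging the roles of $x$ and $y$ gives a second quadrangular subgraph of the same $G$. In it, every rim vertex keeps its two rim edges but trades its hub edge for a former diagonal, which is neither of your two local choices. Your argument is purely local, so it cannot exclude this; classifying such configurations is the real content of the theorem.

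Finally, note what your plan would give if the dichotomy held. Swapping at one vertex forces swapping at all its $H'$-neighbours, hence everywhere, giving $H'=D_1\cup D_2$, which is disconnected. So propagation would force $H'=H$, and together with your uniqueness claim you would get the bound $1$. Nothing in the plan produces $24$; the final sentence is a guess rather than an argument.
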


The situation is quite different when the quadrangular subgraph is non-bipartite.
In this case, Suzuki~\cite{suzukipro} constructed optimal $1$-embedded graphs
on the projective plane with arbitrarily many inequivalent $1$-embeddings.

By contrast, when weak equivalence is considered, uniqueness appears to persist.
Every optimal $1$-plane graph is known to be uniquely
$1$-embeddable up to weak equivalence.
Moreover, in Suzuki's examples on the projective plane, the multiple inequivalent $1$-embeddings are all weakly equivalent.
Motivated by these observations, Suzuki posed the following question.

\begin{prob}[Suzuki \cite{suzukipro}]\label{prob:weak}
Is every optimal $1$-embedded graph on a surface $\FF$ uniquely $1$-embeddable
into $\FF$ up to weak equivalence?
If not, give an example admitting two weakly inequivalent $1$-embeddings.
\end{prob}

We answer Problem~\ref{prob:weak} in the negative.
In fact, this non-uniqueness is a widespread phenomenon:
it occurs on every orientable closed surface of positive genus and on every nonorientable closed surface of genus at least three.
More precisely, we prove the following.

\begin{thm}\label{thm:main}
For every closed surface $\FF$ other than the sphere, the projective plane and the Klein bottle,
there exists an optimal $1$-embedded graph on $\FF$ that admits two weakly inequivalent optimal $1$-embeddings into $\FF$.
\end{thm}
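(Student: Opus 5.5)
The plan follows the strategy announced in the abstract: an explicit example on the torus, then a connected-sum construction that preserves weak inequivalence.

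\textbf{Step 1: the torus.} By \cite{nns}, an optimal $1$-embedded graph $G$ on a surface is obtained from a polyhedral quadrangulation $H$ by adding both diagonals to every face. I would look for a graph $G$ on the torus that arises this way from two polyhedral quadrangulations $H_1,H_2$ with $G=H_1+\text{diagonals}=H_2+\text{diagonals}$, where no automorphism of $G$ maps $E(H_1)$ onto $E(H_2)$. The natural candidates are quadrangulations $Q_{p,q}$ coming from $p\times q$ grids with a twist. The diagonals of one grid structure form another quadrangulation of the same torus, of the shape $Q_{p',q'}$ with a different twist. The non-crossing edges of an optimal $1$-embedding form its quadrangular subgraph, and that subgraph is an invariant of the embedding. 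So it suffices to exhibit two subgraphs $H_1,H_2$ of $G$, each a polyhedral quadrangulation whose diagonal completion is $G$, that are not mapped to each other by any automorphism of $G$. Non-isomorphism of $H_1$ and $H_2$ is the simplest sufficient condition. I would check it through an invariant such as the multiset of lengths of shortest noncontractible cycles, or the structure of the $4$-cycles. I would also verify $3$-connectivity and $3$-representativity of both $H_i$ directly.

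\textbf{Step 2: higher genus, orientable and nonorientable.} Take the torus example $(G,H_1,H_2)$ and a fixed optimal $1$-embedded graph $K$ on a surface $\FF'$. Possible choices for $K$ are another torus example, or an optimal $1$-embedding on the projective plane or Klein bottle. Form a connected sum by deleting one face of $H_i$ in $G$ and one face of $K$, then joining them through a tube quadrangulated so that the result is still an optimal $1$-embedding; for instance, identify the two boundary $4$-cycles via an annulus of quadrilaterals with crossing diagonals. Performing this construction on the same face pattern in both embeddings requires choosing a face that is common, in a suitable sense, to $H_1$ and $H_2$. More likely one glues at a vertex-neighbourhood or a $4$-cycle that is a face in both quadrangulations. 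Since every surface other than the sphere, the projective plane and the Klein bottle is a connected sum of tori and projective planes containing at least one torus summand, iterating the construction gives every required $\FF$.

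\textbf{Step 3: preserving weak inequivalence (main obstacle).} Suppose an automorphism $\sigma$ of the connected sum $G\#K$ carried the first embedding to the second. Then $\sigma$ maps the quadrangular subgraph $H_1\#K$ onto $H_2\#K$. I would show that the gluing $4$-cycle, or the tube, is recognizable combinatorially. Candidate features are special vertex degrees, or a separating $4$-cycle that is unique up to automorphism, which I would make true by inserting a distinctive local gadget in the tube. Then $\sigma$ must preserve the torus side. Restricted to that side, it would give an automorphism of $G$ taking $H_1$ to $H_2$, contradicting Step 1. Making the tube rigid enough to force this, while keeping the result simple, $3$-connected and $3$-representative, is where I expect most of the work. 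Applying the same rigidity argument to $k$ independent torus summands, each choosing $H_1$ or $H_2$, would also yield the exponential count mentioned in the abstract.
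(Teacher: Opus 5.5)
There is a genuine gap in Step 1, which carries the whole content of the theorem. You describe a search but produce no example, and the candidate family you suggest fails. For a grid quadrangulation $H_1$ of $\mathbb{Z}^2/\Lambda$, the diagonals are the steps $\pm(1,1),\pm(1,-1)$. The graph $H_2$ they form is again a grid, but its faces are the diamonds $v,\,v+(1,1),\,v+(2,0),\,v+(1,-1)$, whose diagonals are the steps $(2,0)$ and $(0,2)$. These are not edges of $G$ except on tiny tori; on the $3\times 3$ grid, for instance, $G=K_9$ and $H_1,H_2$ are swapped by an automorphism. So $H_2$ plus its diagonals is not $G$.

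More generally, you give no reason to expect a graph with two automorphically distinct quadrangular subgraphs, and the paper does not use one. In its example both optimal $1$-embeddings have the \emph{same} quadrangular subgraph $\tilde Q$ (the same edge set of $G$), embedded in two different ways. The missing idea is the flip. Three quadrangulated patches hinged at $\{a,h\},\{b,f\},\{c,d\}$ are flipped simultaneously, interchanging the positions of $x$ and $y$. Apart from vertices inside the patches, $x$ and $y$ have the same neighbours (the six hinge vertices), so the graph is unchanged. Only six faces change, each through $x$ or $y$, and the corners $e,g,i$ receive diagonals to both $x$ and $y$ in both embeddings, so the diagonal sets agree too. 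The invariant separating the embeddings is therefore facial, not the quadrangular subgraph itself. Local gadgets give $d,e,x,y$ unique degrees, and $xdea$ is a face of one embedding while no face of the other contains $x$, $d$ and $e$.

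Steps 2 and 3 inherit this problem. To glue and still get two embeddings of one graph, you need a $4$-cycle that is a face with the same diagonals in both embeddings. With $E(H_1)\neq E(H_2)$ this is not guaranteed; in the grid-type candidate $E(H_1)\cap E(H_2)=\emptyset$, so no $4$-cycle is a face of both. In the paper it is automatic, since the flip leaves the face $efgh$ untouched. A direct face-sum there with a polyhedral quadrangulation $R$ of $\FF_0$, where $\FF\cong T^2\#\FF_0$, suffices; your decomposition of $\FF$ is correct.

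Nor do you need the tube to be recognizable or the torus side to be preserved. Cube additions inside the two modified regions make $\deg x,\deg d,\deg e$ exceed every other degree by more than $\Delta(R)$. The face-sum raises other degrees by at most $\Delta(R)-2$ and adds only vertices of degree at most $\Delta(R)$. Hence every automorphism still preserves $\{x,d,e\}$, and the facial invariant survives. Your final reduction is correct and is exactly how the paper concludes: a weak equivalence of optimal $1$-embeddings preserves crossing edges, hence induces one between the quadrangular subgraphs.
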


In Section~\ref{sec:torus}, we give an explicit construction on the torus.
In Section~\ref{sec:general}, we modify and extend this construction to obtain
the result for the other surfaces covered by Theorem~\ref{thm:main}.

\section{Toroidal case}\label{sec:torus}

We first establish the toroidal case of Theorem~\ref{thm:main}.

\begin{prop}\label{prop:torus}
There exists an optimal $1$-embedded graph on the torus that admits two weakly inequivalent optimal $1$-embeddings.
\end{prop}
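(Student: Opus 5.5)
The plan is to build $G$ from two different polyhedral quadrangulations $H_1$ and $H_2$ of the torus that produce the same optimal $1$-embedded graph. By the result of \cite{nns} quoted above, an optimal $1$-embedding of $G$ is determined by its quadrangular subgraph $H$ together with its cellular embedding: $G$ is $H$ plus both diagonals of every face. So it suffices to find a graph $G$ and two spanning subgraphs $H_1,H_2\subset G$ with three properties. First, each $H_i$ is a polyhedral quadrangulation of the torus. Second, $H_i$ plus the face diagonals recovers exactly $G$. Third, no automorphism of $G$ maps $H_1$ to $H_2$. The third property gives weak inequivalence. Indeed, if $hf_1=f_2\sigma$, then $h$ maps crossing edges to crossing edges, since crossings are topological. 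Hence $\sigma$ carries the non-crossing edge set of $f_1$, namely $E(H_1)$, onto that of $f_2$, namely $E(H_2)$.

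For a concrete candidate I would work with toroidal grids. Take $H_1$ to be the quadrangulation of the torus obtained from the $m\times n$ grid $C_m\times C_n$, so $G$ is the graph on $\mathbb{Z}_m\times\mathbb{Z}_n$ with adjacency $(\pm1,0),(0,\pm1),(\pm1,\pm1)$. Now suppose the diagonal directions $(1,1)$ and $(1,-1)$ generate a sublattice in which they form a new grid quadrangulation $H_2$. The faces of $H_2$ are the rhombi with vertices $x,\,x+(1,1),\,x+(2,0),\,x+(1,-1)$. Their diagonals are $(2,0)$ and $(0,2)$, and these are not edges of $G$, so this naive swap fails. I would therefore modify the lattice: use a twisted torus $\mathbb{Z}^2/\Lambda$ in which, say, $(2,0)\equiv(-1,0)$. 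Alternatively, the simpler route is to search directly for a quadrangulation $H_1$ with a second family of $4$-cycles in $G$ that tiles the torus. The most promising explicit choice is a small vertex set, around $n=9$ to $12$, with $G$ having $4n$ edges. There I would list all spanning subgraphs that are quadrangulations and whose diagonal completion is $G$. The goal is a non-grid pattern $H_2$, for instance one containing a vertex of degree different from every vertex of $H_1$. The degree argument gives inequivalence most cleanly: if the multiset of degrees of $H_1$ differs from that of $H_2$, no automorphism of $G$ can send one to the other.

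The steps, in order, are as follows.

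\begin{enumerate}
\item Exhibit $G$ and the two rotation systems explicitly, by figures on a rectangle with identified sides.
\item Check that each $H_i$ quadrangulates the torus. This is an Euler count: with $n$ vertices, $2n$ edges and $n$ faces, one gets $\chi=0$.
\item Check that each $H_i$ is $3$-connected and $3$-representative. A grid-like structure with no short noncontractible cycles makes this routine.
\item Check that inserting diagonals into every face of each $H_i$ yields exactly $E(G)$ with no repeated edges, so that $G$ is simple with $4n$ edges.
\item Distinguish $H_1$ from $H_2$ by an invariant preserved under $\mathrm{Aut}(G)$, such as the degree sequence of the non-crossing subgraph or the number of $4$-cycles through a vertex.
\end{enumerate}

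The main obstacle is finding the example itself. It needs a graph $G$ whose edge set splits in two different ways into a quadrangulation plus its diagonals, with the splittings not related by a symmetry of $G$. In highly symmetric grids, any second splitting tends to be the image of the first under an automorphism, which is exactly Suzuki's phenomenon on the projective plane. So I would deliberately break symmetry: start from a grid quadrangulation and apply a local, face-size-preserving modification (such as a diagonal-type twist along a noncontractible cycle) that creates an alternative quadrangular decomposition. I would then certify inequivalence with a degree-based invariant, or failing that a finite automorphism computation.
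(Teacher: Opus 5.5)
Your reduction is sound. If $hf_1=f_2\sigma$, then $h$ preserves crossings, so $\sigma$ maps the non-crossing edge set $E(H_1)$ onto $E(H_2)$; this is exactly the last step of the paper's proof. The gap is that the proposition is a pure existence statement, and your proposal never produces the object. The grid swap fails by your own computation. The twisted torus with $(2,0)\equiv(-1,0)$ is neither specified nor checked. Steps 1--5 are a checklist for a graph $G$ that, as you acknowledge, has not been found. As written, nothing has been proved.

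Note also that your criterion (no automorphism of $G$ maps $H_1$ to $H_2$) is sufficient but not necessary, and the paper's example does not satisfy it. There, both $1$-embeddings have the same quadrangular subgraph $\tilde{Q}$. The second embedding comes from simultaneously flipping three patches hinged at $\{a,h\}$, $\{b,f\}$, $\{c,d\}$, which interchanges $x$ and $y$. Only six faces around $x$ and $y$ change, and the diagonals they contribute coincide before and after. Local replacements then give $d,e,x,y$ unique degrees, and the embeddings are told apart by the facial cycle $xdea$ through three automorphism-fixed vertices. Invariants like those in your Step 5 (degree sequence of the non-crossing subgraph, $4$-cycle counts) cannot see this mechanism, since the two non-crossing subgraphs are identical.

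Your stronger criterion can nevertheless be met by a very small example in the twisted-torus spirit you mention. Take $G=K_9$ and let $H_1$ be the grid $\mathbb{Z}^2/3\mathbb{Z}^2\cong C_3\times C_3$. Let $H_2$ be the grid $\mathbb{Z}^2/\Lambda$ with $\Lambda=\{(a,b): a+3b\equiv 0 \pmod 9\}$. Concretely, $H_2$ has vertex set $\mathbb{Z}_9$, edges $\{v,v\pm1\}$ and $\{v,v\pm3\}$, and faces $v,v+1,v+4,v+3$. Both graphs are $3$-connected. Each has face-width $\min\{\|\lambda\|_\infty: \lambda\neq 0\}=3$ over its lattice, so both are polyhedral. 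In each case the eight king moves $(\pm1,0),(0,\pm1),(\pm1,\pm1)$ map bijectively onto the nonzero elements of $\mathbb{Z}_3^2$, resp.\ $\mathbb{Z}_9$. Hence adding both diagonals to every face yields the simple graph $K_9$, with $36=4\cdot 9$ edges, in both cases. Finally, $H_1$ contains six triangles, while $H_2\cong C_9(1,3)$ contains only the three triangles $\{v,v+3,v+6\}$. So $H_1\not\cong H_2$, and no permutation of $V(K_9)$ carries $E(H_1)$ onto $E(H_2)$.

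With such an example supplied, your route would give a much shorter proof for the torus than the paper's. The paper's rigid construction with degree-distinguished vertices is what it needs later, to pass to other surfaces by face-sums while keeping control of automorphisms. Without an explicit, verified example, however, your proposal has a gap at its only nontrivial step.
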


\begin{proof}

    \begin{figure}[tbp]
    \centering
    \begin{minipage}{0.45\linewidth}
        \centering
        \includegraphics[width=0.95\linewidth]{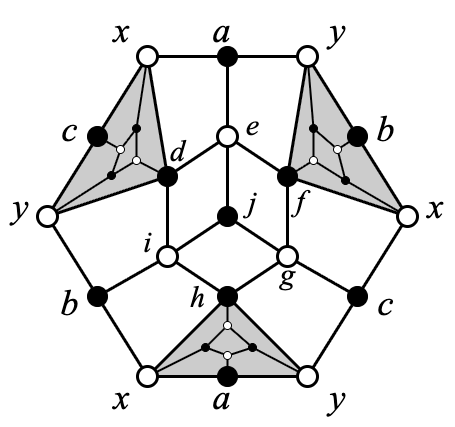}
    \end{minipage}
    \hspace{0.05\linewidth}
    \begin{minipage}{0.45\linewidth}
        \centering
        \includegraphics[width=0.95\linewidth]{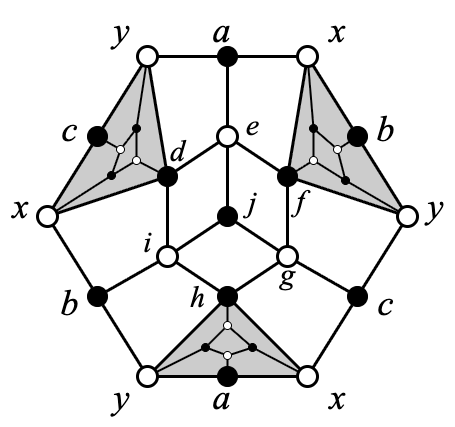}
    \end{minipage}
    \caption{Two weakly equivalent quadrangulations on the torus}
    \label{fig:beforeAndAfter}
    \end{figure}

Figure~\ref{fig:beforeAndAfter} shows two quadrangulations embedded on the torus.
The torus itself is represented by a hexagon in which opposite sides are identified in the same direction.
That is, the boundary vertices with the same label are identified with each other.
It is straightforward to verify that these quadrangulations are
$3$-connected and $3$-representative, that is, polyhedral.
These two embeddings have the same shape, but some of the vertex labels are different.

To see the correspondence between them, consider the three shaded quadrangle patches in the left-hand embedding.
These patches may be regarded as being hinged at the vertex pairs
$\{a,h\}$, $\{b,f\}$, and $\{c,d\}$, respectively.
Simultaneously flipping the three patches about these hinges yields the embedding shown on the right.
During this operation, the six hinge vertices $a,b,c,d,f$ and $h$ remain fixed,
while the positions of $x$ and $y$ are interchanged.
Note that each shaded patch contains several unlabeled interior vertices,
which are carried along with the corresponding patch during the flip.

This flip preserves the graph structure.
Indeed, apart from the unlabeled vertices lying inside the shaded patches,
every vertex adjacent to $x$ or $y$ is one of
$a,b,c,d,f$ and $h$,
and each of these six vertices is adjacent to both $x$ and $y$.
Together with the correspondence of the interior vertices induced by the flip,
this shows that the two quadrangulations are isomorphic as graphs.
Let $Q$ denote this graph, and let $f_1(Q)$ and $f_2(Q)$ denote
the left-hand and right-hand embeddings in Figure~\ref{fig:beforeAndAfter}, respectively.
Thus, $f_1(Q)$ and $f_2(Q)$ are weakly equivalent.

Under the re-embedding from $f_1(Q)$ to $f_2(Q)$,
the six facial cycles
$xaed$, $yfea$, $xcgf$, $yhgc$, $xbih$ and $ydib$
in $f_1(Q)$ are no longer facial in $f_2(Q)$;
in fact, they become essential cycles on the torus.
On the other hand, every other facial cycle of $f_1(Q)$ remains facial in $f_2(Q)$.

    \begin{figure}[tbp]
        \centering
        \includegraphics[width=0.7\linewidth]{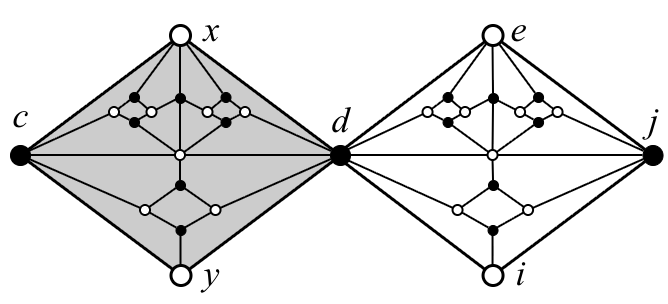}
        \caption{Replacement subgraphs}
        \label{fig:part}
    \end{figure}

We next modify $Q$ locally.
In $f_1(Q)$, replace the subgraphs contained in the quadrangle regions
bounded by $xcyd$ and $dije$ with the graphs shown in Figure~\ref{fig:part},
in exactly the orientations shown there.
In $f_2(Q)$, the replacement subgraph for the region bounded by $dije$ is inserted in the same way,
whereas the replacement subgraph for the region bounded by $xcyd$ is inserted after being flipped about the hinge at $c$ and $d$.
It is easy to check that the resulting quadrangulations are still
polyhedral.
The two resulting quadrangulations are again isomorphic as graphs;
let $\tilde{Q}$ denote this graph.
We denote the embeddings corresponding to $f_1(Q)$ and $f_2(Q)$ by
$\tilde{f}_1(\tilde{Q})$ and $\tilde{f}_2(\tilde{Q})$, respectively.
As in Figure~\ref{fig:beforeAndAfter},
$\tilde{f}_2(\tilde{Q})$ is obtained from $\tilde{f}_1(\tilde{Q})$
by the same flip operation described above.

In fact, $\tilde{f}_1(\tilde{Q})$ and $\tilde{f}_2(\tilde{Q})$ are weakly inequivalent.
More precisely, in $\tilde{Q}$, the vertices $d,e,x$ and $y$
have degrees $10,7,11$ and $9$, respectively,
while every other vertex has degree at most $6$.
Hence every automorphism of $\tilde{Q}$ fixes
$d,e,x$ and $y$ individually.
In $\tilde{f}_1(\tilde{Q})$, the cycle $xdea$ is facial and contains
the three vertices $x,d$ and $e$, which are fixed by every automorphism of $\tilde{Q}$.
On the other hand, no facial cycle of $\tilde{f}_2(\tilde{Q})$
contains all three of these vertices.
It follows that the two embeddings are weakly inequivalent.

We now construct a pair of weakly inequivalent optimal 1-embeddings from
$\tilde{f}_1(\tilde{Q})$ and $\tilde{f}_2(\tilde{Q})$.
For each of these quadrangulations, add a pair of crossing edges to every quadrangle face.
This yields optimal 1-embeddings whose quadrangular subgraphs are
$\tilde{f}_1(\tilde{Q})$ and $\tilde{f}_2(\tilde{Q})$, respectively.

Recall that, among the facial cycles of $\tilde{f}_1(\tilde{Q})$,
only the six cycles $xaed$, $yfea$, $xcgf$, $yhgc$, $xbih$, and $ydib$ are not facial in $\tilde{f}_2(\tilde{Q})$.
Moreover, each of the six vertices $a,b,c,d,f$ and $h$ is adjacent to both $x$ and $y$.
After adding the crossing edges, each of $e,g$ and $i$ is also joined to both $x$ and $y$ by crossing edges in both embeddings.
Consequently, the same set of edges is obtained from
$\tilde{f}_1(\tilde{Q})$ and $\tilde{f}_2(\tilde{Q})$ after adding the crossing edges.
Thus, these two resulting optimal 1-embeddings are drawings of the same graph.
Let this graph be denoted by $G$, and denote these two optimal 1-embeddings by $g_1(G)$ and $g_2(G)$, respectively.

Finally, $g_1(G)$ and $g_2(G)$ are weakly inequivalent.
Indeed, if these two optimal $1$-embeddings were weakly equivalent,
then the corresponding homeomorphism would preserve edge crossings.
Hence the automorphism involved in the weak equivalence would map crossing edges
to crossing edges and non-crossing edges to non-crossing edges,
and would therefore induce a weak equivalence between their quadrangular subgraphs
$\tilde{f}_1(\tilde{Q})$ and $\tilde{f}_2(\tilde{Q})$.
This contradicts the weak inequivalence of
$\tilde{f}_1(\tilde{Q})$ and $\tilde{f}_2(\tilde{Q})$ established above.
Therefore, $g_1(G)$ and $g_2(G)$ are weakly inequivalent optimal
$1$-embeddings of the same graph $G$.

\end{proof}

\section{General case}\label{sec:general}

We now extend the construction in the toroidal case to an arbitrary closed surface other than the sphere, the projective plane, and the Klein bottle.

\begin{proof}[Proof of Theorem~\ref{thm:main}]
The toroidal case has already been established in Proposition~\ref{prop:torus}.
We therefore assume that $\FF$ is not the torus.

We first modify the toroidal construction given in Section~\ref{sec:torus}.
Recall the quadrangulation $\tilde{Q}$ and its two embeddings
$\tilde{f}_1(\tilde{Q})$ and $\tilde{f}_2(\tilde{Q})$
constructed in the previous section.
The subgraphs inside the quadrangle regions bounded by $xcyd$ and $edij$
can be further enlarged without affecting the flip operation that transforms
$\tilde{f}_1(\tilde{Q})$ into $\tilde{f}_2(\tilde{Q})$.

For this purpose, we use the following operation.
Given a quadrangle face $v_1v_2v_3v_4$, a \emph{cube addition} consists of
inserting four new vertices $u_1,u_2,u_3,u_4$ inside the face,
adding the cycle $u_1u_2u_3u_4$, and adding the four edges
$v_iu_i$ for $i=1,2,3,4$.
This operation preserves polyhedrality.
By repeatedly applying cube additions to suitable faces inside the two regions,
we can make the minimum of the degrees of $x,d$, and $e$
arbitrarily larger than the maximum degree of all other vertices.

Let $Q^*$ denote the resulting graph, and let
$f^*_1(Q^*)$ and $f^*_2(Q^*)$ denote the embeddings obtained from
$\tilde{f}_1(\tilde{Q})$ and $\tilde{f}_2(\tilde{Q})$, respectively.
The same flip operation as in Section~\ref{sec:torus} transforms
$f^*_1(Q^*)$ into $f^*_2(Q^*)$.

These two embeddings are still weakly inequivalent.
Indeed, $x,d$, and $e$ are the only vertices of sufficiently large degree,
and hence every automorphism of $Q^*$ preserves the set $\{x,d,e\}$.
In $f^*_1(Q^*)$, the cycle $xdea$ is facial and contains all three vertices
$x,d$, and $e$.
On the other hand, no facial cycle of $f^*_2(Q^*)$ contains all three of them.
Therefore,
$f^*_1(Q^*)$ and $f^*_2(Q^*)$ are weakly inequivalent.

We now extend this construction to $\FF$.
By the classification of closed surfaces, there exists a closed surface $\FF_0$
such that
\[
    \FF \cong T^2 \# \FF_0.
\]
Choose a polyhedral quadrangulation $R$ of $\FF_0$ and a quadrangle face $C$ of $R$,
and let $\Delta$ be the maximum degree of $R$.
We may choose the cube additions so that
the minimum degree among $x,d$, and $e$
exceeds the maximum degree of all other vertices of $Q^*$ by more than $\Delta$.

Remove the interiors of the face bounded by $efgh$ in $f^*_1(Q^*)$
and of the face $C$ in $R$, and identify their boundary $4$-cycles.
This produces a polyhedral quadrangulation of
$T^2\#\FF_0\cong\FF$.
Indeed, this face-sum preserves both $3$-connectivity and $3$-representativity.
Performing the same operation with $f^*_2(Q^*)$ produces another embedding
of the same graph.
Let $Q_{\FF}$ denote this graph, and denote these two embeddings by
\[
    f^{\FF}_1(Q_{\FF})
    \quad\text{and}\quad
    f^{\FF}_2(Q_{\FF}),
\]
respectively.
The attachment of $R$ does not interfere with the flip operation.
Indeed, the face bounded by $efgh$ is unchanged by the re-embedding of $Q^*$,
and the same quadrangulation $R$ is attached along this face in both embeddings.
Hence the flip can still be performed entirely in the toroidal part,
with $R$ left unchanged.
Thus,
$f^{\FF}_2(Q_{\FF})$ is obtained from
$f^{\FF}_1(Q_{\FF})$ by the same flip operation as before.

We next verify that these two embeddings are weakly inequivalent.
If a boundary vertex $v$ of $efgh$ is identified with a boundary vertex $u$ of $C$,
then its degree increases by $\deg_R(u)-2$, which is at most $\Delta-2$.
The degrees of $x$ and $d$ are unchanged, since neither vertex lies on $efgh$,
while the degree of $e$ can only increase.
By the choice of the degree gap in $Q^*$,
$x,d$, and $e$ are therefore still the only vertices of sufficiently large degree
in $Q_{\FF}$.
Hence every automorphism of $Q_{\FF}$ preserves the set $\{x,d,e\}$.

The attachment along $efgh$ does not affect the facial cycle $xdea$.
Thus, $f^{\FF}_1(Q_{\FF})$ has a facial cycle containing all three vertices
$x,d$, and $e$, whereas $f^{\FF}_2(Q_{\FF})$ has no such facial cycle.
Therefore,
$f^{\FF}_1(Q_{\FF})$ and $f^{\FF}_2(Q_{\FF})$ are weakly inequivalent.

Finally, add a pair of crossing edges to every face of
$f^{\FF}_1(Q_{\FF})$ and $f^{\FF}_2(Q_{\FF})$.
This gives two optimal $1$-embeddings on $\FF$.
As in the toroidal construction, the added crossing edges in the $Q^*$-part
correspond under the re-embedding, while those in the $R$-part are identical
in the two embeddings.
Hence the two resulting optimal $1$-embeddings are embeddings of the same graph.
Let this graph be denoted by $G_{\FF}$, and denote the two embeddings by
\[
    g^{\FF}_1(G_{\FF})
    \quad\text{and}\quad
    g^{\FF}_2(G_{\FF}).
\]

These two optimal $1$-embeddings are weakly inequivalent.
Indeed, any weak equivalence between them would preserve edge crossings
and hence induce a weak equivalence between their quadrangular subgraphs
$f^{\FF}_1(Q_{\FF})$ and $f^{\FF}_2(Q_{\FF})$,
contrary to what was proved above.
Therefore, $G_{\FF}$ admits two weakly inequivalent optimal
$1$-embeddings into $\FF$.

\end{proof}

The proof of Theorem~\ref{thm:main} also gives a simple way to construct graphs with many pairwise weakly inequivalent optimal $1$-embeddings.
We record this consequence below.

\begin{cor}\label{cor:many}
The following statements hold.
\begin{enumerate}
    \item For every $g\geq 1$, there exists an optimal $1$-embedded graph
    on the orientable surface of genus $g$ that admits at least $2^g$
    pairwise weakly inequivalent optimal $1$-embeddings.

    \item For every $k\geq 3$, there exists an optimal $1$-embedded graph
    on the nonorientable surface of genus $k$ that admits at least
    \[
        2^{\lfloor (k-1)/2\rfloor}
    \]
    pairwise weakly inequivalent optimal $1$-embeddings.
\end{enumerate}
\end{cor}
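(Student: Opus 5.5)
We plan to take $g$ disjoint copies $T_1,\dots,T_g$ of the modified toroidal gadget $f^*_1(Q^*)$ from the proof of Theorem~\ref{thm:main} and glue them together by face-sums into a single polyhedral quadrangulation of the orientable surface of genus $g$. Concretely, we chain them: in each $T_j$ we use two faces that are unchanged by the flip (for instance $efgh$ and another face disjoint from all flipped patches and from $xdea$, which is available after a few cube additions). We then identify a face of $T_j$ with a face of $T_{j+1}$ after deleting their interiors. Each face-sum preserves $3$-connectivity and $3$-representativity, exactly as in the proof of Theorem~\ref{thm:main}, and the result is a polyhedral quadrangulation $Q_g$ of the genus-$g$ orientable surface. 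Since the attaching faces are untouched by the flip, the flip of each gadget can be performed independently. For every $S\subseteq\{1,\dots,g\}$ we obtain an embedding $f_S(Q_g)$ by flipping exactly the gadgets in $S$; these are $2^g$ embeddings of the same graph. Adding crossing pairs to all faces gives $2^g$ optimal $1$-embeddings $g_S(G_g)$ of one graph $G_g$. As in Proposition~\ref{prop:torus}, the crossing edges agree because each flip only permutes the essential/facial status of the six cycles around $x_j,y_j$, and all the resulting diagonals are joined either way.

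To distinguish the embeddings up to weak equivalence, we make the gadgets pairwise distinguishable by degrees. Using cube additions inside the regions $x_jc_jy_jd_j$ and $e_jd_ji_jj_j$, we arrange distinct degree levels: $\min\{\deg x_j,\deg d_j,\deg e_j\}$ for gadget $j$ exceeds $\max\{\deg x_{j+1},\deg d_{j+1},\deg e_{j+1}\}$ by a large margin. We also require that all non-special vertices have degree far below every special vertex, with the gap exceeding the degree increase caused by the face-sums (at most twice the maximum degree of a gadget boundary vertex). Then every automorphism $\sigma$ of $Q_g$ fixes each triple $\{x_j,d_j,e_j\}$ setwise. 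Hence any weak equivalence $hf_S=f_{S'}\sigma$ maps facial cycles to facial cycles. In $f_S$, a facial cycle containing $x_j,d_j,e_j$ exists if and only if $j\notin S$, since the attachments do not touch $xdea$ or the flipped patches. So $S=S'$, and the $2^g$ embeddings are pairwise weakly inequivalent. The passage from quadrangulations to optimal $1$-embeddings uses the same crossing-preservation argument as before.

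For the nonorientable surface $N_k$ with $k\ge3$, we write $k=2m+1$ or $k=2m+2$ with $m=\lfloor(k-1)/2\rfloor\ge1$. Then $N_k\cong (T^2)^{\# m}\# N_1$ or $(T^2)^{\# m}\# N_2$. We chain $m$ gadgets as above and face-sum one further polyhedral quadrangulation $R$ of the projective plane or Klein bottle onto the last gadget, as in the proof of Theorem~\ref{thm:main}. The degree gaps are chosen larger than $\Delta(R)$, so the argument is unchanged and yields $2^m$ pairwise weakly inequivalent embeddings.

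The main obstacle is the bookkeeping for the degree hierarchy. The cube additions must raise $\deg x_j$, $\deg d_j$ and $\deg e_j$ to prescribed, well-separated levels while keeping every other vertex of low degree. We also need a second attaching face per gadget that is disjoint from the flipped patches, so that the flip still commutes with the gluing. The first thing to try is to add cubes repeatedly at faces incident to $x_j,d_j,e_j$ inside the two designated regions. Each cube addition at a face containing a given vertex raises that vertex's degree by one, while the new vertices have degree $3$ and the other corners only gain one each time. So we rotate among the faces to keep those other corners' degrees bounded.
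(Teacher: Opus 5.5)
Your proposal is correct and follows essentially the same route as the paper: chain copies of the toroidal gadget and flip them independently; choose degree bands far enough apart that every automorphism fixes each triple $X_j=\{x_j,d_j,e_j\}$ setwise; recover the flip pattern $S$ from whether some facial cycle passes through all of $X_j$; and use a base quadrangulation of $N_1$ or $N_2$ in the nonorientable case. Your extra details make precise what the paper leaves as ``suitable quadrangle faces'': a second flip-invariant attaching face for the middle gadgets (for instance an inner cube face in the region $e_jd_ji_jj_j$) and the explicit deduction $S=S'$. One small fix: since $e_j$ itself lies on $e_jf_jg_jh_j$, your degree-increase bound should refer only to the non-special attaching-face vertices, and the gluings should never identify two special vertices.
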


\begin{proof}
The construction used in the proof of Theorem~\ref{thm:main} can be
repeated independently in several toroidal parts.
For the $j$th toroidal part, let
\[
    X_j=\{x_j,d_j,e_j\}
\]
denote its three distinguished vertices.
We choose the degree parameters in the different toroidal parts sufficiently
far apart so that every automorphism of the resulting graph preserves each
set $X_j$.
As in the proof of Theorem~\ref{thm:main}, one of the two embeddings of the
$j$th toroidal part has a facial cycle containing all three vertices of
$X_j$, whereas the other has no such facial cycle.
The face-sums do not affect this distinction.
Hence, if two choices of re-embeddings differ in at least one toroidal part,
the resulting embeddings cannot be weakly equivalent.

For the orientable surface of genus $g$, take $g$ copies of the toroidal
construction and join them successively along suitable quadrangle faces.
The resulting surface is the connected sum of $g$ tori.
Since the two re-embeddings can be chosen independently in each of the
$g$ toroidal parts, this gives $2^g$ pairwise weakly inequivalent
quadrangulations.

For the nonorientable case, let $N_k$ denote the nonorientable surface of
genus $k$.
If $k=2r+1$, start with a polyhedral quadrangulation of the projective plane
and attach $r$ copies of the toroidal construction.
Since
\[
    N_1\# rT^2\cong N_{2r+1},
\]
this gives $2^r$ pairwise weakly inequivalent quadrangulations on $N_k$.
If $k=2r+2$, start instead with a polyhedral quadrangulation of the Klein bottle
and attach $r$ copies of the toroidal construction.
Since
\[
    N_2\# rT^2\cong N_{2r+2},
\]
we again obtain $2^r$ pairwise weakly inequivalent quadrangulations.
In both cases,
\[
    r=\left\lfloor\frac{k-1}{2}\right\rfloor.
\]

Finally, adding a pair of crossing edges to every quadrangle face,
as in the proof of Theorem~\ref{thm:main}, converts these quadrangulations
into optimal $1$-embeddings of the same graph.
Any weak equivalence between two of these optimal $1$-embeddings would induce
a weak equivalence between their quadrangular subgraphs.
Hence the resulting optimal $1$-embeddings are also pairwise weakly inequivalent.
\end{proof}

\section{Remarks}\label{sec:remarks}
We conclude with some remarks on the cases not covered by
Theorem~\ref{thm:main} and on the number of weakly inequivalent optimal
$1$-embeddings.

Theorem~\ref{thm:main} leaves the projective plane and the Klein bottle
unresolved.
Suzuki~\cite{suzukimax} showed that every optimal
$1$-plane graph is uniquely $1$-embeddable up to weak equivalence.
It remains open whether there exists an optimal $1$-embedded graph on the
projective plane or the Klein bottle that admits two weakly inequivalent
optimal $1$-embeddings.

It is also natural to ask how many weakly inequivalent optimal
$1$-embeddings a graph can have on a fixed surface.
Corollary~\ref{cor:many} shows that the number can grow exponentially with
the genus when the surface is allowed to vary.
A related question concerning equivalence, rather than weak equivalence,
was posed by Suzuki~\cite{suzukipro}.
This leads to the following question.

\begin{prob}\label{prob:number}
For a fixed closed surface $\FF$, is there a constant $c(\FF)$ such that
every optimal $1$-embedded graph on $\FF$ admits at most $c(\FF)$
pairwise weakly inequivalent optimal $1$-embeddings?
\end{prob}

For the sphere, the answer to Problem~\ref{prob:number} is affirmative
with $c(\FF)=1$ by Suzuki~\cite{suzukimax}.
For the projective plane, the examples in~\cite{suzukipro} with arbitrarily
many inequivalent optimal $1$-embeddings do not give an unbounded number of
weak equivalence classes, since all of those embeddings are weakly equivalent.
Thus, Problem~\ref{prob:number} remains open even for the projective plane.

\section*{Acknowledgements}
This work was supported by JSPS KAKENHI Grant Number JP23K13006.

\section*{Data availability}
No data were used for the research described in the article.


\end{document}